\algrenewcommand\algorithmicrequire{\textbf{Input:}}
\algrenewcommand\algorithmicensure{\textbf{Output:}}
\newcommand{\HRule}{\rule{\linewidth}{0.25mm}}
\newtheorem{theorem}{Theorem}
\title{Symbolic Algorithm for Solving SLAEs with Heptadiagonal
Coefficient Matrices}
\author[1]{Milena Veneva \thanks{milena.p.veneva@gmail.com}}
\author[1]{Alexander Ayriyan \thanks{ayriyan@jinr.ru}}
\affil[1]{Joint Institute for Nuclear Research, Laboratory of Information 
Technologies, Joliot-Curie 6, 141980 Dubna, Moscow region, Russia}
\date{}
\begin{document}
\maketitle
\abstract{%
This paper presents a symbolic algorithm for solving band matrix
systems of linear algebraic equations with heptadiagonal coefficient matrices.
The algorithm is given in pseudocode. A theorem which gives the condition for
the algorithm to be stable is formulated and proven.
}
\maketitle
%

\section{Introduction}

Systems of linear algebraic equations~(SLAEs) with heptadiagonal coefficient
matrices may arise after many different scientific and engineering problems,
as well as problems of the computational linear algebra where finding the
solution of a SLAE is considered to be one of the most important problems.
For instance, a semi-implicit formulation for the discretization of the
transient terms of the system of partial differential equations~(PDEs) which
models a multiphase fluid flow in porous media yields to a heptadiagonal system
of pressure equations for each time step~(see~\cite{Kim}). On the other hand,
in~\cite{Duran} the 3D problem, simulating the incompressible blood flow in
arteries with a structured mesh domain leads to a heptadiagonal SLAE.

A whole branch of symbolic algorithms for solving systems of linear algebraic
equations with different coefficient matrices exists in the literature.
\cite{El-Mikkawy}~considers a tridiagonal matrix and a symbolic version of the
Thomas method~\cite{Higham} is formulated. The authors of~\cite{Karawia_2013a}
build an algorithm in the case of a general bordered tridiagonal SLAE, while
in~\cite{Atlan} the coefficient matrix taken into consideration is a general
opposite-bordered tridiagonal one. A pentadiagonal coefficient matrix is of
interest in~\cite{Askar}, while a cyclic pentadiagonal coefficient matrix is
considered in~\cite{Jia}. The latter algorithm can be applied to periodic
tridiagonal and periodic pentadiagonal SLAE either by setting the corresponding
matrix terms to be zero.

A performance analysis of effective methods (both numerical and symbolic) for
solving band matrix SLAEs (with three and five diagonals) being implemented in
\texttt{C++} and run on modern (as of 2018) computer systems is made by us
in~\cite{Veneva_2019}. Different strategies (symbolic included) for solving
band matrix SLAEs (with three and five diagonals) are explored by us
in~\cite{Veneva_2018}. A performance analysis of effective symbolic algorithms
for solving band matrix SLAEs with coefficient matrices with three, five and
seven diagonals being implemented in both \texttt{C++} and \texttt{Python} and
run on modern (as of 2018) computer systems is made by us in~\cite{Veneva}.
Note that the algorithm for solving a SLAE with a heptadiagonal coefficient
matrix considered in~\cite{Veneva} is the one that is going to be introduced
in the next Section.

After obtaining the algorithm independently, it has been found in the
article~\cite{Karawia_2013b} where it is applied for cyclic heptadiagonal SLAEs.
Thus, we do not claim out priority to this algorithm. However, the
novelties of this work are as follows: pure heptadiagonality, proved necessary
requirements, classical Thomas expressions i.\,e. the algorithm's formalism
follows the form of expressions that are usually used in the Thomas
algorithm for tridiagonal SLAE~\cite{Higham}, that is, the solution is searched
in the form: $y_{i} = \alpha_{i+1}y_{i+1} + \beta_{i+1}$.


\section{The Algorithm}

Let us consider a SLAE $Ax = y$, where $A$ is a $N\times N$ heptadiagonal matrix,\\
$A=\textrm{heptadiag}(\mathbf{c^{*}}, \mathbf{b^{*}}, \mathbf{a^{*}}, \mathbf{d},
\mathbf{a}, \mathbf{b}, \mathbf{c})$, $x$ and $y$ are vectors of length $N$:
\begin{equation*}
\begin{pmatrix}
d_{0} & a_{0} & b_{0} & c_{0} & 0 & \dots  & \dots & \dots & \dots & 0 \\
a_{1}^{*} & d_{1} & a_{1} & b_{1} & c_{1} & 0 & \dots & \dots & \dots & 0 \\
b_{2}^{\star} & a_{2}^{\star} & d_{2} & a_{2} & b_{2} & c_{2} & 0 & \dots & \dots & 0 \\
c_{3}^{\star} & b_{3}^{\star} & a_{3}^{\star} & d_{3} & a_{3} & b_{3} & c_{3} & 0 & \dots & 0 \\
0 & c_{4}^{\star} & b_{4}^{\star} & a_{4}^{\star} & d_{4} & a_{4} & b_{4} & c_{4} & \ddots & 0 \\
\vdots & \ddots & \ddots & \ddots & \ddots & \ddots & \ddots & \ddots & \ddots & \vdots \\
0 & \dots & 0 & c_{N-4}^{\star} & b_{N-4}^{\star} & a_{N-4}^{\star} & d_{N-4} & a_{N-4} & b_{N-4} & c_{N-4} \\
0 & \dots & \dots & 0 & c_{N-3}^{\star} & b_{N-3}^{\star} & a_{N-3}^{\star} & d_{N-3} & a_{N-3} & b_{N-3} \\
0 & \dots & \dots & \dots & 0 & c_{N-2}^{\star} & b_{N-2}^{\star} & a_{N-2}^{\star} & d_{N-2} & a_{N-2} \\
0 & \dots & \dots & \dots & \dots & 0 & c_{N-1}^{\star} & b_{N-1}^{\star} & a_{N-1}^{\star} & d_{N-1}
\end{pmatrix}
\begin{pmatrix}
x_{0} \\
x_{1} \\
x_{2} \\
x_{3} \\
x_{4} \\
\vdots \\
x_{N-4} \\
x_{N-3} \\
x_{N-2} \\
x_{N-1}
\end{pmatrix}
=
\begin{pmatrix}
y_{0} \\
y_{1} \\
y_{2} \\
y_{3} \\
y_{4} \\
\vdots \\
y_{N-4} \\
y_{N-3} \\
y_{N-2} \\
y_{N-1}
\end{pmatrix}.
\end{equation*}

A symbolic algorithm for solving SLAEs with a heptadiagonal coefficient matrix
is considered. It is based on LU decomposition in which the system $Ax = y$ is
rewritten as $LUx = y$, where $L$ and $U$ are a lower triangular and an upper
triangular matrices, respectively. The algorithm consists of two steps -- the
$LU$ decomposition together with the downwards sweep $Lz = y$ happen during the
first step, leading us from $Ax = y$ to $Ux = z$, while the upwards sweep
(solving $Ux = z$ for $x$) is done during the second step.
%

\textbf{Remark:} seems that the expression for $g_{i-2}$ is missing in the
\texttt{for} loop on p.~435 of~\cite{Karawia_2013b}. Also, the expression for
$k_{1}$ on p.~435 of the same paper is not used anywhere, so it is probably
a leftover from a previous algorithm.

Now we shall formulate a symbolic algorithm for solving such a SLAE.
\\\HRule\\[-0.1cm]
\textbf{Algorithm 1.} Symbolic algorithm for solving a SLAE $Ax = y$.
\\[-0.1cm]
\HRule\\[-0.1cm]
\begin{algorithmic}[1]
\label{alg:1}
\Require{$\mathbf{c^{*}}, \mathbf{b^{*}}, \mathbf{a^{*}}, \mathbf{d},
\mathbf{a}, \mathbf{b}, \mathbf{c}, \mathbf{y}, \varepsilon$}
\Ensure{$\mathbf{x}$}
\If{$det(A)==0$}\State{Exit.}\EndIf
\State{bool flag = False}
\State{$\mu_{0}:=d_{0}$} \Comment{Step 1.(0)}
\If{$|\mu_{0}|<\varepsilon$} \State{$\mu_{0}:=$ symb; flag = True}\EndIf
\State{$\alpha_{0}:=\dfrac{a_{0}}{\mu_{0}};\quad\beta_{0}:=\dfrac{b_{0}}{\mu_{0}};\quad \gamma_{0}:=\dfrac{c_{0}}{\mu_{0}};\quad\delta_{0}:=0.0$}
\State{$\delta_{1}:=a_{1}^{*};\quad\mu_{1}:=d_{1}-\alpha_{0}\delta_{1}$} \Comment{(1)}
\If{!flag}\If{$|\mu_{1}|<\varepsilon$}\State{$\mu_{1}:=$ symb; flag = True}\EndIf\EndIf
\State{$\alpha_{1}:=\dfrac{a_{1}-\beta_{0}\delta_{1}}{\mu_{1}};\quad
\beta_{1}:=\dfrac{b_{1}-\gamma_{0}\delta_{1}}{\mu_{1}};\quad
\gamma_{1}:=\dfrac{c_{1}}{\mu_{1}}$}
\State{$\delta_{2}:=a_{2}^{*}-\alpha_{0}b_{2}^{*};\quad
\mu_{2}:=d_{2}-\alpha_{1}\delta_{2}-\beta_{0}b_{2}^{*}$} \Comment{(2)}
\If{!flag}\If{$|\mu_{2}|<\varepsilon$}\State{$\mu_{2}:=$ symb; flag = True}\EndIf\EndIf
\State{$\alpha_{2}:=\dfrac{a_{2}-\beta_{1}\delta_{2}-\gamma_{0}b_{2}^{*}}{\mu_{2}};\quad
\beta_{2}:=\dfrac{b_{2}-\gamma_{1}\delta_{2}}{\mu_{2}};\quad
\gamma_{2}:=\dfrac{c_{2}}{\mu_{2}}$}
\State{$z_{0}:=\dfrac{y_{0}}{\mu_{0}};\quad
z_{1}:=\dfrac{y_{1}-\delta_{1}z_{0}}{\mu_{1}};\quad
z_{2}:=\dfrac{y_{2}-\delta_{2}z_{1}-b_{2}^{*}z_{0}}{\mu_{2}}$}
\State{$\xi_{0}:=0.0;\quad\xi_{1}:=0.0;\quad\xi_{2}:=0.0$}
\For{$\overline{i=3,\ldots N-4}$} \Comment{($i$)}
\State{$\delta_{i}:=a_{i}^{*}-\alpha_{i-2}b_{i}^{*}-c_{i}^{*}
(\beta_{i-3}-\alpha_{i-3}\alpha_{i-2})$} \State{$\xi_{i}:=b_{i}^{*}-\alpha_{i-3}c_{i}^{*}$}
\State{$\mu_{i}:=d_{i}-\alpha_{i-1}\delta_{i}-\beta_{i-2}\xi_{i} -\gamma_{i-3}c_{i}^{*}$}
\If{!flag}\If{$|\mu_{i}|<\varepsilon$}\State{$\mu_{i}:=$ symb; flag = True}\EndIf\EndIf
\State{$\alpha_{i}:=\dfrac{a_{i}-\beta_{i-1}\delta_{i}-\gamma_{i-2}\xi_{i}}{\mu_{i}};\quad
\beta_{i}:=\dfrac{b_{i}-\gamma_{i-1}\delta_{i}}{\mu_{i}};\quad
\gamma_{i}:=\dfrac{c_{i}}{\mu_{i}}$}
\State{$z_{i}:=\dfrac{y_{i}-\delta_{i}z_{i-1}-\xi_{i}z_{i-2}-c_{i}^{*}z_{i-3}}{\mu_{i}}$}
\EndFor
\State{$\delta_{N-3}:=a_{N-3}^{*}-\alpha_{N-5}b_{N-3}^{*}-c_{N-3}^{*}
(\beta_{N-6}-\alpha_{N-6}\alpha_{N-5})$} \Comment{($N-3$)}
\State{$\xi_{N-3}:=b_{N-3}^{*}-\alpha_{N-6}c_{N-3}^{*}$}
\State{$\mu_{N-3}:=d_{N-3}-\alpha_{N-4}\delta_{N-3}-\beta_{N-5}\xi_{N-3} -\gamma_{N-6}c_{N-3}^{*}$}
\If{!flag}\If{$|\mu_{N-3}|<\varepsilon$}\State{$\mu_{N-3}:=$ symb; flag = True}\EndIf\EndIf
\State{$\alpha_{N-3}:=\dfrac{a_{N-3}-\beta_{N-4}\delta_{N-3}-\gamma_{N-5}\xi_{N-3}}{\mu_{N-3}}$}
\State{$\beta_{N-3}:=\dfrac{b_{N-3}-\gamma_{N-4}\delta_{N-3}}{\mu_{N-3}}$}
\State{$z_{N-3}:=\dfrac{y_{N-3}-\delta_{N-3}z_{N-4}-\xi_{N-3}z_{N-5}-c_{N-3}^{*}z_{N-6}}{\mu_{N-3}}$}
\State{$\delta_{N-2}:=a_{N-2}^{*}-\alpha_{N-4}b_{N-2}^{*}-c_{N-2}^{*}
(\beta_{N-5}-\alpha_{N-5}\alpha_{N-4})$} \Comment{($N-2$)}
\State{$\xi_{N-2}:=b_{N-2}^{*}-\alpha_{N-5}c_{N-2}^{*}$}
\State{$\mu_{N-2}:=d_{N-2}-\alpha_{N-3}\delta_{N-2}-\beta_{N-4}\xi_{N-2} -\gamma_{N-5}c_{N-2}^{*}$}
\If{!flag}\If{$|\mu_{N-2}|<\varepsilon$}\State{$\mu_{N-2}:=$ symb; flag = True}\EndIf\EndIf
\State{$\alpha_{N-2}:=\dfrac{a_{N-2}-\beta_{N-3}\delta_{N-2}-\gamma_{N-4}\xi_{N-2}}{\mu_{N-2}}$}
\State{$z_{N-2}:=\dfrac{y_{N-2}-\delta_{N-2}z_{N-3}-\xi_{N-2}z_{N-4}-c_{N-2}^{*}z_{N-5}}{\mu_{N-2}}$}
\State{$\delta_{N-1}:=a_{N-1}^{*}-\alpha_{N-3}b_{N-1}^{*}-c_{N-1}^{*}
(\beta_{N-4}-\alpha_{N-4}\alpha_{N-3})$} \Comment{($N-1$)}
\State{$\xi_{N-1}:=b_{N-1}^{*}-\alpha_{N-4}c_{N-1}^{*}$}
\State{$\mu_{N-1}:=d_{N-1}-\alpha_{N-2}\delta_{N-1}-\beta_{N-3}\xi_{N-1} -\gamma_{N-4}c_{N-1}^{*}$}
\If{!flag}\If{$|\mu_{N-1}|<\varepsilon$}\State{$\mu_{N-1}:=$ symb; flag = True}\EndIf\EndIf
\State{$z_{N-1}:=\dfrac{y_{N-1}-\delta_{N-1}z_{N-2}-\xi_{N-1}z_{N-3}-c_{N-1}^{*}z_{N-4}}{\mu_{N-1}}$}
\State{$x_{N-1}:=z_{N-1}$} \Comment{Step 2. Solution}
\State{$x_{N-2}:=z_{N-2}-\alpha_{N-2}x_{N-1}$}
\State{$x_{N-3}:=z_{N-3}-\alpha_{N-3}x_{N-2}-\beta_{N-3}x_{N-1}$}
\For{$\overline{j=N-4,\ldots 0}$}
\State{$x_{j}:=z_{j}-\alpha_{j}x_{j+1}-\beta_{j}x_{j+2}-\gamma_{j}x_{x+3}$}
\EndFor
\State{Cancel the common factors in the numerators and denominators of
$\mathbf{x}$, making them coprime. Substitute $\textrm{symb}:=0$ in
$\mathbf{x}$ and simplify.}
\end{algorithmic}
\HRule

\textbf{Remark:} If any $\mu_{i}$ expression has been evaluated to be zero or
numerically zero, then it is assigned to be a symbolic variable. We cannot
compare any of the next $\mu$ expressions with $\varepsilon$, because any
further $\mu$ is going to be a symbolic expression. To that reason, we use a
boolean flag which tells us if any previous $\mu$ is a symbolic expression. In
that case, comparison with $\varepsilon$ is not conducted as being not needed.


\section{Stability of the Algorithm}

Some observations on the stability of the proposed algorithm can be made.
Firstly, assigning $\mu_{i}, i=\overline{0, N-1}$ to be equal to a symbolic
variable in case it is zero or numerically zero, ensures correctness of the
formulae for computing the solution of the considered SLAE. This action does
not add any additional requirements to the coefficient matrix, except:
\begin{theorem}
The only requirement to the coefficient matrix so as the algorithm to be
stable is nonsingularity.
\end{theorem}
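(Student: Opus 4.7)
Necessity is immediate: if $A$ is singular then $Ax=y$ either lacks a solution or has infinitely many, and the algorithm terminates at the initial $\det(A)=0$ check. The content of the theorem is therefore sufficiency: assuming $\det(A)\neq 0$, no additional hypothesis (such as diagonal dominance or sign conditions on the bands) is required.

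The approach is to split into two cases according to whether any pivot $\mu_i$ vanishes. If every $\mu_i$ is safely nonzero, the algorithm reduces to a plain $LU$ factorization: the recurrences for $\alpha_i,\beta_i,\gamma_i,\delta_i,\xi_i,\mu_i$ together encode $A=LU$, and those for $z_i$ encode $Lz=y$, so the back-substitution phase returns the exact solution. If instead the first vanishing pivot occurs at index $i$, I would observe that replacing $\mu_i$ by the symbolic variable $s$ is algebraically equivalent to replacing $d_i$ by $d_i+s$, since $s$ enters the pivot formula additively. Writing $\tilde A(s)$ for the perturbed matrix, the algorithm then performs the ordinary (non-symbolic) recurrences on $\tilde A(s)$, producing rational expressions $\tilde x(s)$ that satisfy $\tilde A(s)\tilde x(s)=y$ on the cofinite set of $s$ for which all subsequent pivots $\mu_j(s)$ are nonzero.

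I would then invoke Cramer's rule: each component of $\tilde x(s)$ can be written $P_k(s)/\det\tilde A(s)$ for a polynomial $P_k$, hence is meromorphic in $s$ with poles only where $\det\tilde A$ vanishes. Since $\tilde A(0)=A$ is nonsingular, $\det\tilde A(0)\neq 0$, so $s=0$ is not a pole and $\lim_{s\to 0}\tilde x(s)=A^{-1}y=x$. The final step of the algorithm---cancelling common factors in each entry of $\tilde x(s)$ and substituting $s:=0$---computes exactly this limit, so the returned vector is the true solution.

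The most delicate point is that a later pivot $\mu_j$ may itself be a polynomial in $s$ that vanishes at $s=0$, and because the flag suppresses a second substitution, the algorithm appears to divide by something that is zero there. The Cramer argument resolves this cleanly: regularity of $\tilde x(s)$ at $s=0$ forces any $s$-factor in a denominator to be matched by one in the numerator, which the cancellation post-processing removes. Formalising this requires only checking, by induction along the recurrences, that the algorithm's symbolic output really is the Cramer-type rational function $\tilde A(s)^{-1}y$ on the dense open set where all $\mu_j(s)\neq 0$---a routine but lengthy verification.
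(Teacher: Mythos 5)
Your argument is essentially correct in outline, but it proceeds by a genuinely different route than the paper. The paper's proof never mentions perturbations or Cramer's rule: it rests on the classical LU pivot identity $\prod_{j=0}^{i}\mu_{j}=M_{i}$, where $M_{i}$ is the $i$-th leading principal minor, so that each pivot is a ratio $\mu_{i}=M_{i}/M_{i-1}$ of consecutive leading principal minors. From this the determinant telescopes, $\det(A)=\prod_{i=0}^{N-1}\mu_{i}=M_{N-1}$, and any symbolic variable substituted for a vanishing $\mu_{i}$ appears in both numerator and denominator of this product and cancels; the conclusion is that the only constraint is $M_{N-1}\neq 0$. Your proof instead reads the substitution $\mu_{i}:=\mathrm{symb}$ as the diagonal perturbation $d_{i}\mapsto d_{i}+s$, runs the ordinary recurrences on $\tilde A(s)$, and uses Cramer's rule plus regularity of $\tilde A(s)^{-1}y$ at $s=0$ to justify the final cancel-and-substitute step. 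Your version actually proves more than the paper's does: the paper only tracks the determinant and never verifies that the output vector $\mathbf{x}$ is correct after the substitution $\mathrm{symb}:=0$, whereas your limit argument addresses exactly that. The paper's version, in exchange, makes explicit \emph{why} a pivot can vanish for a nonsingular matrix (a vanishing leading principal minor) and yields the practically useful determinant formula.

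One caveat applies to both arguments, and you come closest to it in your ``most delicate point'': neither proof rules out that a later pivot $\mu_{j}(s)$ vanishes \emph{identically} in $s$, i.e.\ that both $M_{j}$ and the minor obtained by deleting row and column $i$ from the leading $(j+1)\times(j+1)$ block vanish. In that case the symbolic recurrences divide by the zero rational function, and since the boolean flag forbids a second substitution the algorithm genuinely fails; your numerator/denominator matching argument covers a zero of $\mu_{j}(s)$ at $s=0$ but not an identical zero. Also note that when $0<|\mu_{i}|<\varepsilon$ the substitution is $d_{i}\mapsto d_{i}+s-\mu_{i}$ rather than $d_{i}\mapsto d_{i}+s$, so $\tilde A(0)\neq A$ and the limit recovers the solution of a nearby system only; this is an artifact of the thresholding that the paper likewise ignores.
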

\begin{proof}
As a direct consequence of the transformations done so as the matrix $A$ to be
factorized and then the downwards sweep to be conducted, it follows that the
determinant of the matrix $A$ in the terms of the introduced notation is:
\begin{equation}
\label{eq_1}
\textrm{det}(A) = \prod_{i=0}^{N-1}\mu_{i}|_{\textrm{symb}=0}.
\end{equation}
(This formula could be used so as the nonsingularity of the coefficient matrix
to be checked.)
If $\mu_{i}$ for any $i$ is assigned to be equal to a symbolic variable, then
it is going to appear in both the numerator and the denominator of the
expression for the determinant and so it can be cancelled:
\begin{equation}
\begin{aligned}
\label{eq_2}
\textrm{det}(A)
&= \mu_{0}\,\mu_{1}\,\mu_{2}\ldots\mu_{N-2}\,\mu_{N-1} = \\
&= M_{0}\,\frac{M_{1}}{\mu_{0}}\,\frac{M_{2}}{\mu_{0}\,\mu_{1}}\ldots
\frac{M_{N-2}}{\mu_{0}\,\mu_{1}\ldots\mu_{N-3}}\,\frac{M_{N-1}}
{\mu_{0}\,\mu_{1}\ldots\mu_{N-2}} = \\
& = \frac{\prod_{i=0}^{N-1}M_{i}}{\mu_{0}^{N-1}\,\mu_{1}^{N-2}\,\mu_{2}^{N-3}
\ldots\mu_{N-3}^{2}\,\mu_{N-2}^{1}} = \\
& = \frac{\prod_{i=0}^{N-1}M_{i}}{M_{0}^{N-1}\,\frac{M_{1}^{N-2}}{\mu_{0}^{N-2}}\,
\frac{M_{2}^{N-3}}{\mu_{0}^{N-3}\,\mu_{1}^{N-3}}\ldots\frac{M_{N-3}^{2}}
{\mu_{0}^{2}\,\mu_{1}^{2}\ldots\mu_{N-4}^{2}}\,\frac{M_{N-2}^{1}}{\mu_{0}^{1}\,
\mu_{1}^{1}\ldots\mu_{N-3}^{1}}} = \\
& = \frac{\prod_{i=0}^{N-1}M_{i}}{M_{0}^{N-1}\,\frac{M_{1}^{N-2}}{\mu_{0}^{N-2}}\,
\frac{M_{2}^{N-3}}{\mu_{0}^{N-3}\,\left(\frac{M_{1}}{\mu_{0}}\right)^{N-3}}\ldots
\frac{M_{N-3}^{2}}
{\mu_{0}^{2}\,\left(\frac{M_{1}}{\mu_{0}}\right)^{2}\ldots
\left(\frac{M_{N-4}}{\mu_{N-3}}\right)^{2}}\,\frac{M_{N-2}^{1}}{\mu_{0}^{1}\,
\left(\frac{M_{1}}{\mu_{0}}\right)^{1}\ldots\left(\frac{M_{N-3}}{\mu_{N-4}}\right)^{1}}} = \\
& = \frac{\prod_{i=0}^{N-1}M_{i}}{\prod_{i=0}^{N-2}M_{i}} = M_{N-1}
\end{aligned}
\end{equation}
where $M_{i}$ is the $i$-th leading principal minor, and $\mu_{0} = M_{0}$.
This means that the only constraint on the coefficient matrix is $M_{N-1}\neq0$.
\end{proof}
The requirement on the coefficient matrix to be nonsingular is not limiting at
all since this is a standard requirement so as the SLAE to have one solution
only.


\section*{Acknowledgements}
The work is partially supported by the Russian Foundation 
for Basic Research under project \#18-51-18005.


\section{Conclusions}

A symbolic algorithm for solving band matrix SLAEs with heptadiagonal
coefficient matrices was presented in pseudocode. Some notes on the stability
of the algorithm were made.


\end{document}